\newtheorem{thm}{Theorem}[section]
\newtheorem{prop}[thm]{Proposition}
\newtheorem{rem}[thm]{Remark}
\newtheorem{defn}[thm]{Definition}
\author{Jeremy Rickard} 
\address{School of Mathematics, University of Bristol, Bristol BS8 1TW, UK}
\email{j.rickard@bristol.ac.uk}
\date\today 
\title{A cocomplete but not complete abelian category} 
\keywords{}
\begin{document}
\maketitle

\begin{abstract}
  An example of a cocomplete abelian category that is not complete is
  constructed.
\end{abstract}

\section{Introduction}
\label{sec:intro}

In this paper we answer a question that was posted on the internet
site MathOverflow by Simone Virili~\cite{MO}. The question asked
whether there is an abelian category that is cocomplete but not
complete. It seemed that there must be a standard example, or an easy
answer, but despite receiving a fair amount of knowledgeable interest,
after many months the question still had no solution.

Recall that a complete category is one with all small limits, and that
for an abelian category, which by definition has kernels, completeness
is equivalent to the existence of all small products. Similarly,
because of the existence of cokernels, cocompleteness for abelian
categories is equivalent to the existence of all small coproducts.

A common, but wrong, first reaction to the question is that there are
easy natural examples, with something such as the category of torsion
abelian groups coming to mind: an infinite direct sum of torsion
groups is torsion, but an infinite direct product of torsion groups
may not be. Nevertheless, this category does have products: the
product of a set of groups is simply the torsion subgroup of their
direct product. In categorical terms, the category of torsion abelian
groups is a coreflective subcategory of the category of all abelian
groups (which certainly has products), with the functor sending a
group to its torsion subgroup being right adjoint to the inclusion
functor.

Similar, but more sophisticated, considerations doom many approaches
to finding an example.

Recall that an $\mathtt{(AB5)}$ category, in the terminology of
Grothendieck~\cite{grothendieck}, is a cocomplete abelian category in
which filtered colimits are exact, and that a Grothendieck category is
an $\mathtt{(AB5)}$ abelian category with a generator. The favourite
cocomplete abelian category of the typical person tends to be a
Grothendieck category, such as a module category or a category of
sheaves, but it is well known that every Grothendieck category is
complete (for a proof see, for
example,~\cite[Prop. 8.3.27]{kashiwara_schapira}).

More generally, any locally presentable category is
complete~\cite[Cor. 1.28]{adamek_rosicki}, and most standard
constructions of categories preserve local presentability.

\section{The construction}
\label{sec:main}

First we shall fix a chain of fields
$\{k_\alpha\mid\alpha\in\mathbf{On}\}$ indexed by the ordinals such
that $k_\beta/k_\alpha$ is an infinite degree field extension whenever
$\alpha<\beta$. For example, we could take a class
$\{x_\alpha\mid\alpha\in\mathbf{On}\}$ of variables indexed by
$\mathbf{On}$ and let $k_\alpha=\mathbb{Q}(X_{<\alpha})$ be the field
of rational functions in the set of variables
$X_{<\alpha}=\{x_\gamma\mid\gamma<\alpha\}$.

We generally adopt the convention that categories are locally small
(i.e., the class of morphisms between two objects is always a
set). However, we'll start by defining a ``category'' $\mathbf{C}$
which is not locally small.

An object $V$ of $\mathbf{C}$ consists of a
$k_\alpha$-vector space $V_\alpha$ for each ordinal $\alpha$, together
with a $k_\alpha$-linear map $v_{\alpha,\beta}:V_\alpha\to V_\beta$
for each pair $\alpha<\beta$ of ordinals, such that
$v_{\alpha,\gamma}=v_{\beta,\gamma}v_{\alpha,\beta}$ whenever
$\alpha<\beta<\gamma$. (When we denote an object by an upper case
letter such as $V$, we will always use, without further comment, the
corresponding lower case letter for the linear maps
$v_{\alpha,\beta}$.)

A morphism $\varphi:V\to W$ of $\mathbf{C}$ consists of a
$k_\alpha$-linear map $\varphi_\alpha:V_\alpha\to W_\alpha$ for each
ordinal $\alpha$, such that
$\varphi_\beta v_{\alpha,\beta}=w_{\alpha,\beta}\varphi_\alpha$
whenever $\alpha<\beta$.

Composition is defined in the obvious way. If $\vartheta:U\to V$ and
$\varphi:V\to W$ are morphisms, then
$(\varphi\vartheta)_\alpha=\varphi_\alpha\vartheta_\alpha$.

It is straightforward to check that $\mathbf{C}$ is an additive (in
fact, $k_0$-linear) category, but not locally small: for example, if
$V$ is the object with $V_\alpha=k_\alpha$ for every $\alpha$ and
$v_{\alpha,\beta}=0$ for all $\alpha<\beta$, then a morphism
$\varphi:V\to V$ has $v_\alpha:k_\alpha\to k_\alpha$ multiplication by
some scalar $\lambda_\alpha\in k_\alpha$ for some arbitrary choice of
$\left\{\lambda_\alpha\mid\alpha\in\mathbf{On}\right\}$,
so the class of endomorphisms of $V$ is a proper class.

\begin{prop}
  $\mathbf{C}$ is a (not locally small) abelian category
  with (small) products and coproducts in which (small) filtered
  colimits are exact.
\end{prop}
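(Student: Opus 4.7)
The plan is to verify every requirement by performing the relevant construction componentwise---one ordinal at a time---thereby reducing everything to the corresponding statement for $k_\alpha$-vector spaces, where it is standard. The crucial general fact underlying this is that a morphism $\varphi : V \to W$ in $\mathbf{C}$ is zero (respectively, a monomorphism or an epimorphism) if and only if each component $\varphi_\alpha$ is, because the zero morphism, composition, and agreement of morphisms in $\mathbf{C}$ are all detected componentwise.

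For kernels and cokernels of $\varphi : V \to W$, I would set $(\ker \varphi)_\alpha = \ker \varphi_\alpha$ and $(\coker \varphi)_\alpha = \coker \varphi_\alpha$, and use the relation $\varphi_\beta v_{\alpha,\beta} = w_{\alpha,\beta} \varphi_\alpha$ to check that $v_{\alpha,\beta}$ restricts to $\ker \varphi_\alpha$ and that $w_{\alpha,\beta}$ descends from $\coker \varphi_\alpha$. Each resulting map is $k_\alpha$-linear into a $k_\beta$-vector space (which is an admissible target since it is automatically a $k_\alpha$-vector space), so the componentwise data assemble into objects of $\mathbf{C}$, and the universal properties transfer from the component level because morphisms in $\mathbf{C}$ are compatible families. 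The remaining abelian axioms---every monomorphism is the kernel of its cokernel, and dually for epimorphisms---then follow at each ordinal and hence in $\mathbf{C}$. For small products, coproducts, and filtered colimits I would apply the same scheme: $(\prod_{i \in I} V^{(i)})_\alpha = \prod_{i \in I} V^{(i)}_\alpha$, $(\coprod_{i \in I} V^{(i)})_\alpha = \bigoplus_{i \in I} V^{(i)}_\alpha$, and analogously for filtered colimits, with structure maps assembled componentwise. Smallness of the index set $I$ is precisely what guarantees that each component remains a set, and exactness of small filtered colimits in $\mathbf{C}$ reduces, via the componentwise description of kernels and cokernels, to the classical exactness of filtered colimits of vector spaces.

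There is no serious conceptual obstacle here; the work is largely bookkeeping. The subtlety worth keeping in the foreground is that although the ground field varies with $\alpha$, the structure maps are only required to be $k_\alpha$-linear (not $k_\beta$-linear), so componentwise constructions over different fields glue together without friction, and the arguments need no compatibility between the various field structures beyond the inclusions $k_\alpha \subseteq k_\beta$.
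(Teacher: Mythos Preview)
Your proposal is correct and is essentially the same as the paper's own proof: both argue that kernels, cokernels, products, coproducts, and filtered colimits are given by the obvious pointwise constructions, so that all the abelian axioms and the exactness of filtered colimits reduce to the corresponding facts for $k_\alpha$-vector spaces. You spell out a few more details (e.g., how $v_{\alpha,\beta}$ restricts to kernels and $w_{\alpha,\beta}$ descends to cokernels), but the strategy is identical.
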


\begin{proof}
  It is straightforward to check that the obvious ``pointwise''
  constructions give kernels, cokernels, products and coproducts. For
  example, if $\varphi:V\to W$ is a morphism, then the kernel of
  $\varphi$ is the object $U$ with $U_\alpha$ the kernel of
  $\varphi_\alpha:V_\alpha\to W_\alpha$ and
  $u_{\alpha,\beta}:U_\alpha\to U_\beta$ the natural map between the
  kernels of $\varphi_\alpha$ and $\varphi_\beta$ induced by
  $v_{\alpha,\beta}$. 

  Since all these constructions are ``pointwise'', it is also
  straightforward to check that every monomorphism is the kernel of
  its cokernel and every epimorphism is the cokernel of its kernel (so
  that $\mathbf{C}$ is abelian), and that small filtered colimits are
  exact, as the verification reduces to the corresponding facts for
  the category of $k_\alpha$-vector spaces.
\end{proof}

The (locally small) category that we're really interested in is a full
subcategory $\mathbf{G}$ of $\mathbf{C}$.

\begin{defn}
  An object $V$ of $\mathbf{C}$ is {\bf $\alpha$-grounded} if, for
  every $\beta>\alpha$, $V_\beta$ is generated as a $k_\beta$-vector
  space by the image of $v_{\alpha,\beta}$. The full subcategory of
  $\mathbf{C}$ consisting of the $\alpha$-grounded objects is denoted
  by $\alpha$-$\mathbf{G}$.
\end{defn}

\begin{defn}
  An object $V$ of $\mathbf{C}$ is {\bf grounded} if it is
  $\alpha$-grounded for some ordinal $\alpha$. The full subcategory of
  $\mathbf{C}$ consisting of the grounded objects is denoted by
  $\mathbf{G}$.
\end{defn}

\begin{thm}
  $\mathbf{G}$ is a (locally small) $\mathtt{(AB5)}$ abelian category
  that is not complete.
\end{thm}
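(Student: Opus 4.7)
The plan is to verify that $\mathbf{G}$ is locally small, abelian, has small coproducts with exact filtered colimits, and fails to have all products. The first three are closure properties of the grounding condition inside $\mathbf{C}$; the non-completeness is the substantive content. For local smallness: if $V$ is $\alpha$-grounded and $W\in\mathbf{G}$, a morphism $\varphi\colon V\to W$ is determined by $(\varphi_\beta)_{\beta\leq\alpha}$, since for $\beta>\alpha$ the $k_\beta$-linear map $\varphi_\beta$ is forced by $\varphi_\alpha$ on the $k_\beta$-spanning set $v_{\alpha,\beta}(V_\alpha)$; the indexing $\{\beta\leq\alpha\}$ is a set and each $\Hom_{k_\beta}(V_\beta,W_\beta)$ is a set.

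For the abelian structure and cocompleteness, closure under quotients is immediate (images of spanning sets span), so cokernels stay in $\mathbf{G}$. Closure under kernels is the delicate step: for $\varphi\colon V\to W$ with both $\alpha$-grounded, apply the snake lemma to
\[
\begin{array}{ccccccccc}
0 & \to & k_\beta\otimes_{k_\alpha}(\ker\varphi)_\alpha & \to & k_\beta\otimes_{k_\alpha} V_\alpha & \to & k_\beta\otimes_{k_\alpha} (\im\varphi)_\alpha & \to & 0 \\
& & \downarrow & & \downarrow & & \downarrow & & \\
0 & \to & (\ker\varphi)_\beta & \to & V_\beta & \to & (\im\varphi)_\beta & \to & 0
\end{array}
\]
whose top row is exact by flatness of field extensions and whose middle and right verticals are surjective (since $V$ and its quotient $\im\varphi$ are $\alpha$-grounded); a suitable enlargement of $\alpha$ until the relevant $k_\beta$-dimensions have stabilized forces the left vertical to be surjective too, so $\ker\varphi$ is grounded. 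Small coproducts and filtered colimits are computed pointwise, preserve groundedness after raising grounding levels to their supremum, and inherit exactness from the vector-space case of Proposition 1.1.

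For non-completeness, the plan is to rigidify a hypothetical product by Yoneda applied to a family of test objects. For each ordinal $\gamma$ introduce $X_\gamma\in\mathbf{G}$ with $X_{\gamma,\beta}=k_\beta$ for $\beta\geq\gamma$ and $X_{\gamma,\beta}=0$ otherwise, with inclusions as structure maps; direct inspection gives $\Hom_\mathbf{G}(X_\gamma,V)\cong V_\gamma$ naturally in $V$ as $k_\gamma$-vector spaces, and precomposition with the canonical morphism $X_{\gamma'}\to X_\gamma$ for $\gamma\leq\gamma'$ recovers the transition map of $V$. Now take the family $\{V^{(n)}\}_{n<\omega}$ with $V^{(n)}_\beta=k_\beta^n$ and inclusion structure maps, each $0$-grounded. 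If a product $P=\prod_n V^{(n)}$ existed in $\mathbf{G}$, evaluation at $X_\gamma$ would give $P_\gamma\cong\prod_n k_\gamma^n$ for every $\gamma$, and the transition maps would be forced to be the pointwise inclusions, so $P$ would coincide as an object of $\mathbf{C}$ with the pointwise product $Q$. But $Q$ is not grounded: any finite $k_\beta$-linear combination $\sum_{j=1}^m c_j\mathbf{y}_j$ with $\mathbf{y}_j\in Q_\alpha=\prod_n k_\alpha^n$ has all of its countably many coordinates lying in the finite-dimensional $k_\alpha$-subspace $k_\alpha c_1+\cdots+k_\alpha c_m$ of $k_\beta$, whereas the infinite degree of $k_\beta/k_\alpha$ lets us pick $(z^{(n)})_n\in Q_\beta$ with $k_\alpha$-linearly independent coordinates outside any such span; hence $Q\notin\mathbf{G}$, contradicting $P=Q\in\mathbf{G}$.

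The main obstacle is the rigidity of the Yoneda step: the $X_\gamma$ must corepresent the level functors functorially enough in $\gamma$ that the whole of $P$ (including transition maps) gets identified with the pointwise product $Q$, after which the infinite-degree assumption on $k_\beta/k_\alpha$ is what excludes $Q$ from $\mathbf{G}$. The kernel-closure step in the abelian-structure verification is the other technical point, requiring a careful snake-and-stabilization argument to control the right-hand vertical in the diagram above.
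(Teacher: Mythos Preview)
Your argument follows the paper's closely: your $X_\gamma$ are exactly the paper's $M^\gamma$, the Yoneda identification of the components and transition maps of a hypothetical product is the same, and the infinite-degree obstruction to groundedness is the same (your test family $\{V^{(n)}\}$ with $\prod_n k_\beta^n$ differs only cosmetically from the paper's $(M^\alpha)^{\mathbb N}$ with $k_\beta^{\mathbb N}$). You are also right to single out kernel-closure as the one nontrivial step among the closure properties---the paper simply asserts that the kernel of a map between $\alpha$-grounded objects is again $\alpha$-grounded, whereas your snake-lemma/stabilisation sketch is pointing at what is actually needed, though in the infinite-dimensional case it would take more care than your one sentence to make precise.
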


\begin{proof}
  Let $V$ be an $\alpha$-grounded object. If $\varphi: V\to W$ is a
  morphism then, for $\beta>\alpha$, $\varphi_\beta$ is determined by
  $\varphi_\alpha$, and so the class of morphisms $\varphi: V\to W$ is
  a set. Hence $\mathbf{G}$ is locally small.

  Let $\varphi:V\to W$ be a morphism between $\alpha$-grounded
  objects. Clearly the kernel and cokernel of $\varphi$ are also
  $\alpha$-grounded. Hence $\mathbf{G}$ is closed under kernels and
  cokernels in $\mathbf{C}$ and so it is an exact abelian
  subcategory of $\mathbf{C}$.

  If $\left\{V^i\mid i\in I\right\}$ is a set of grounded objects,
  then there is some ordinal $\alpha$ so that every $V^i$ is
  $\alpha$-grounded. Then $\bigoplus_{i\in I}V^i$ is also
  $\alpha$-grounded. Thus $\mathbf{G}$ is cocomplete and the inclusion
  functor $\mathbf{G}\hookrightarrow\mathbf{C}$ preserves coproducts,
  and hence all colimits. Exactness of filtered colimits in
  $\mathbf{G}$ therefore follows from the same property for
  $\mathbf{C}$.

  Thus $\mathbf{G}$ is a locally small $\mathtt{(AB5)}$ abelian
  category.

  For an ordinal $\alpha$, let $M^\alpha$ be the object with
$$M^\alpha_\beta=
\begin{cases}
  0 &\mbox{if }\beta<\alpha\\
k_\beta&\mbox{if }\beta\geq\alpha,
\end{cases}
$$
and $m^\alpha_{\beta,\gamma}:k_\beta\to k_\gamma$ the inclusion map
for $\alpha\leq\beta<\gamma$. Then $M^\alpha$ is $\alpha$-grounded.

If $W$ is any object of $\mathbf{C}$ then a morphism
$\varphi:M^\alpha\to W$ is determined by
$\varphi_\alpha: M^\alpha_\alpha=k_\alpha\to W_\alpha$, so
$\mathbf{C}(M^\alpha,W)\cong W_\alpha$ and $M^\alpha$ represents the
functor $W\mapsto W_\alpha$ from $\mathbf{C}$ to $k_\alpha$-vector
spaces. Also, if $\alpha<\beta$ then the obvious morphism
$\varphi:M^\beta\to M^\alpha$, with $\varphi_\gamma$ the identity map
$k_\gamma\to k_\gamma$ for $\gamma\geq\beta$, induces a commutative
square
$$\xymatrix{
  \mathbf{C}(M^\alpha,W)\ar[r]^-*[@]{\sim}\ar[d]_{\mathbf{C}(\varphi,W)}
  &W_\alpha\ar[d]^{w_{\alpha,\beta}}\\
  \mathbf{C}(M^\beta,W)\ar[r]^-*[@]{\sim}&W_\beta\\
}$$

We shall show that in $\mathbf{G}$ there is no product of a countably
infinite collection of copies of $M^\alpha$, so $\mathbf{G}$ is not
complete.

Suppose that $P$ is the product (in $\mathbf{G}$) of countably many
copies of $M^\alpha$. Then for any ordinal $\beta\geq\alpha$,
$$\mathbf{G}(M^\beta,P)\cong\mathbf{G}(M^\beta,M^\alpha)^\mathbb{N}
\cong (M^\alpha_\beta)^\mathbb{N}\cong k_\beta^\mathbb{N}$$ as
$k_\beta$-vector spaces.

But if $\alpha<\beta<\gamma$ then $k_\gamma^\mathbb{N}$ is not
generated as a $k_\gamma$-vector space by $k_\beta^\mathbb{N}$,
since $k_\gamma/k_\beta$ is an infinite field extension. So $P$ cannot
be $\beta$-grounded for any $\beta$.
\end{proof}

\begin{rem}
  We have already noted that products do exist in $\mathbf{C}$, with
  the obvious pointwise construction. However, the product of
  $\alpha$-grounded objects need not be $\beta$-grounded for any
  $\beta$.

  Products also exist in $\alpha$-$\mathbf{G}$, since although the
  pointwise product may not be $\alpha$-grounded,
  $\alpha$-$\mathbf{G}$ is a coreflective subcategory of
  $\mathbf{C}$. We can $\alpha$-ground an object $V$ of $\mathbf{C}$
  by replacing $V_\beta$ by $v_{\alpha,\beta}(V_\alpha)$ for
  $\beta>\alpha$, and the product of a set of objects in
  $\alpha$-$\mathbf{G}$ is obtained by $\alpha$-grounding the product
  in $\mathbf{C}$.
\end{rem}

\begin{rem}
  Each category $\alpha$-$\mathbf{G}$ is a cocomplete and complete
  abelian category (in fact, a Grothendieck category, with
  $\bigoplus_{\beta\leq\alpha}M^\beta$ a generator). The inclusion
  functors $\alpha$-$\mathbf{G}\to\beta$-$\mathbf{G}$ are exact and
  preserve coproducts, but do not preserve products, which explains
  why their union $\mathbf{G}$ has coproducts but does not have
  products (or at least not in an obvious way). Thanks to Zhen Lin Low
  for making this observation in a comment on MathOverflow~\cite{MO}.
\end{rem}

\begin{rem}
  Of course, an example of a complete abelian category that is not
  cocomplete can be constructed by taking the opposite category of
  $\mathbf{G}$.
\end{rem}

\section*{Acknowledgements}
I would like to thank MathOverflow and its community for introducing
me to this and many other interesting questions, and for their
comments on this question in particular.

\bibliography{mybib}{} 
\bibliographystyle{amsalpha}

\end{document}